\newtheorem{theorem}{Theorem}
\newtheorem*{theorem*}{Theorem}
\newtheorem{lemma}{Lemma}
\newtheorem{question}{Question}
\theoremstyle{remark}
\newtheorem{remark}{Remark}
\newtheorem{definition}{Definition}
\newtheorem{example}{Example}
\newcommand{\abs}[1]{\left\lvert#1\right\rvert}
\newcommand{\norm}[1]{\lvert\lvert#1\rvert\rvert}
\newcommand{\R}{\mathbb{R}}
\newcommand{\disc}{\mathbb{D}}
\newcommand{\C}{\mathbb{C}}
\newcommand{\n}{\mathbb{N}}
\newcommand{\zb}{\overline{z}}
\newcommand{\D}{\Omega}
\newcommand{\dbar}{\overline{\partial}}
\title[Hilbert-Schmidt Hankel Operators on Complex Ellipsoids]{Hilbert-Schmidt Hankel Operators with anti-holomorphic symbols on Complex Ellipsoids}
\author{Mehmet \c{C}el\.ik}
\address[Mehmet \c{C}elik]{University of North Texas at Dallas, Department of
Mathematics and Information Sciences,  7400 University Hills Blvd., 
Dallas, TX 75241}
\email{mehmet.celik@unt.edu}
\author{Yunus E. Zeytuncu}
\address[Yunus E. Zeytuncu]{Texas A\&M University, Department of
Mathematics, College Station, TX 77843}
\email{zeytuncu@math.tamu.edu}
\subjclass[2000]{Primary 32W05; Secondary 46B35}
\keywords{Hankel operators, Hilbert-Schmidt operators, complex ellipsoids, Canonical solution operator for $\dbar$-problem}
\date{\today}
\begin{document}

\begin{abstract}
On a complex ellipsoid in $\C^n$, we show that there is no nonzero Hankel operator with an anti-holomorphic symbol that is Hilbert-Schmidt.
\end{abstract}

\maketitle 

\section{Introduction}
\subsection{Setup and Problem}
Let $\D$ be a \textit{complex ellipsoid} in $\C^{n}$ of the form
\[\D=\{(z^{\prime},z_{n})\in \C^{n}\ |\ z^{\prime}\in\disc^{n-1}\ \text{ and }\ |z_n|<e^{-\varphi(z^{\prime})}\}\]
where $z^{\prime}=(z_1,\ldots,z_{n-1})$, $\disc^{n-1}$ is the \textit{unit $(n-1)$-disc}, $\{m_j\}_{j=1}^{n}\subset \R^{+}$, and  
\[\varphi(z^{\prime})=-\frac{1}{2m_{n}}\log\left(1-(|z_{1}|^{2m_1}+\ldots+|z_{n-1}|^{2m_{n-1}})\right).\]
Note that $\D$ is a bounded pseudoconvex Reinhardt domain.
We denote the space of square integrable functions and the space of square integrable holomorphic functions on $\D$ by $L^{2}(\D)$ and $A^{2}(\D)$ (the Bergman space of $\D$), respectively. The Bergman projection operator, $P$, is the orthogonal projection from $L^2(\D)$ onto $A^{2}(\D)$. It is an integral operator with the kernel called the Bergman kernel, which is denoted by $B_{\D}(z,w)$. Moreover, if $\{e_n(z)\}_{n=0}^{\infty}$ is an orthonormal basis for $A^2(\D)$ then the Bergman kernel can be represented as 
$$B_{\D}(z,w)=\sum\limits_{n=0}^{\infty}e_n(z)\overline{e_n(w)}.$$

For $f\in A^2(\D)$, the Hankel operator with the anti-holomorphic symbol $\overline{f}$ is formally defined on $A^2(\D)$ by
\[ H_{\overline{f}}(g)=(I-P)(\overline{f}g).\]

Note that this (possibly unbounded) operator is densely defined on $A^2(\D)$. It is known that $H_{\overline{f}}$ is bounded whenever $f$ is in a certain BMO space, see \cite{Beatrous95} and the references therein. 

For a multi-index $\gamma=(\gamma_1,\ldots,\gamma_n)\in \n^{n}$, we set 

\begin{align}
c_{\gamma}^{2}=\int\limits_{\D}\abs{z^{\gamma}}^{2}dV(z).
\end{align}
The set $\left\{\frac{z^{\gamma}}{c_{\gamma}}\right\}_{\gamma\in \n^{n}}$ constitutes a complete orthonormal basis for $A^{2}(\D)$. Indeed, for two distinct multi-indices  $\alpha$ and $\beta$,  we have $ \langle z^{\alpha},z^{\beta}\rangle_{\D}=0$. This follows from the radial symmetric structure of complex ellipsoid $\D$,
\begin{align*}
 \langle z^{\alpha},z^{\beta}\rangle_{\D}&=\int\limits_{\D} z^{\alpha}\overline{z}^{\beta} dV(z)
=\underbrace{\int\limits_{\disc^{n-1}} (z^{\prime})^{\alpha^{\prime}}(\overline{z}^{\prime})^{\beta^{\prime}} 
\underbrace{\int\limits_{|z_{n}|<e^{-\varphi(z^{\prime})}}z_{n}^{\alpha_{n}}\overline{z}_{n}^{\beta_{n}} \ dA(z_n)}_{\text{is zero unless }\ \alpha_{n}=\beta_{n}}\ dA(z^{\prime})}_{\text{is zero unless }\ \alpha^{\prime}=\beta^{\prime}}.
\end{align*}

\begin{definition} 
A linear bounded operator $T$ on a Hilbert space $H$ is called a \textit{Hilbert-Schmidt operator} if there is an orthonormal basis $\{\xi_{j}\}$ for $H$ such that the sum $\sum\limits_{j=1}^{\infty}\norm{T(\xi_{j})}^2$ is finite. 
\end{definition}
Note that, for any two orthonormal bases $\{\xi_{j}\}_{j=0}^{\infty}$ and $\{\nu_{j}\}_{j=0}^{\infty}$ we have 
\[\sum\limits_{j=0}^{\infty}\norm{T(\xi_{j})}^{2}
=\sum\limits_{j=0}^{\infty}\sum\limits_{k=0}^{\infty}\left|\langle T(\xi_{j}),\nu_{k}\rangle\right|^{2}=\sum\limits_{j=0}^{\infty}\sum\limits_{k=0}^{\infty}\left|\langle \xi_{j},T^{*}(\nu_{k})\rangle\right|^{2}=\sum\limits_{k=0}^{\infty}\norm{T^{*}(\nu_{k})}^{2}.\]
This shows that $T$ is a Hilbert-Schmidt operator if and only if $T^{*}$ is a Hilbert Schmidt operator. Therefore, the sum does not depend on the choice of orthonormal basis $\{\xi_{j}\}$.  Moreover, if $P_N$ is the projection onto Span$\{\xi_{j},1\leq j\leq N\}$, then $\{TP_{N}\}$ converges in operator norm to $T$ and $TP_{N}$ is finite ranked. Moreover, since $P_{N}$'s are Hilbert-Schmidt if $T$ is a compact operator then $TP_{N}$ is Hilbert-Schmidt, and we have $\norm{TP_N-T}_{\text{op.norm}}\rightarrow 0$ as $N\rightarrow\infty$, so Hilbert-Schmidt operators are dense in the space of compact operators. For more on Hilbert-Schmidt operators see \cite[Section X.]{Retherford93}.

In this paper, we investigate the following problem. On a given bounded Reinhardt domain in $\C^n$, characterize the symbols for which the corresponding Hankel operators are Hilbert-Schmidt. This question was first studied in $\C$ on the unit disc in \cite{ArazyFisherPeetre88}, see also Example \ref{Disc example} in the next section. The problem was studied on higher dimensional domains in \cite[Theorem at pg. 2]{KeheZhu90} where the author showed that when $n\geq 2$, on an $n$-dimensional complex ball there are no nonzero Hilbert-Schmidt Hankel operators (with anti-holomorphic symbols) on the Bergman space. The result was revisited in \cite{Schneider07} with a more robust approach. On more general domains in higher dimensions, the question was explored in \cite[Theorem 1.1]{KrantzLiRochberg97} where the authors extended the result \cite[Theorem at pg. 2]{KeheZhu90} to bounded pseudoconvex domains of finite type in $\C^2$ with smooth boundary. 

The same question was investigated on Cartan domains of tube type  in \cite[Section 2]{Arazy1996} and on strongly psuedoconvex domains in \cite{Li93, Peloso94}. Arazy studied the natural generalization of Hankel operators on Cartan domains (a circular, convex, irreducible bounded symmetric domains in $\C^n$) of tube type and rank $r>1$ in $\C^n$ for which $n/r$ is an integer. He showed that there is no nonzero Hilbert-Schmidt Hankel operators with anti-holomorphic symbols on those type of domains. Li and Peloso, independently, obtained the same result on strongly pseudoconvex domains with smooth boundary.

\subsection{Results}
In this paper, we show that on a complex ellipsoid in $\C^{n}$ (for $n>1$) there are no nonzero Hilbert-Schmidt Hankel operators with anti-holomorphic symbols on the space of square integrable holomorphic functions.

\begin{theorem}\label{Main}
Let $\D$ be a complex ellipsoid in $\C^n$ and $f \in A^2 (\D)$.
If the Hankel operator $H_{\overline{f}}$ is Hilbert-Schmidt on $A^2 (\D)$ then $f$ is constant.
\end{theorem}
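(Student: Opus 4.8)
The plan is to compute the Hilbert--Schmidt sum of $H_{\overline{f}}$ directly in the monomial orthonormal basis $\{z^\gamma/c_\gamma\}$ and show it is infinite unless $f$ is constant. Write $f=\sum_{\alpha}a_\alpha z^\alpha$. Since $\D$ is Reinhardt, $P$ is equivariant for the torus action $z\mapsto e^{i\theta}z$; concretely, orthogonality of monomials gives $P(\overline{z}^\alpha z^\gamma)=\frac{c_\gamma^2}{c_{\gamma-\alpha}^2}\,z^{\gamma-\alpha}$ when $\gamma\geq\alpha$ componentwise, and $P(\overline{z}^\alpha z^\gamma)=0$ otherwise. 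Hence each $(I-P)(\overline{z}^\alpha z^\gamma)$ is homogeneous of torus-weight $\gamma-\alpha$, so for fixed $\gamma$ the functions $\{(I-P)(\overline{z}^\alpha z^\gamma)\}_\alpha$ are mutually orthogonal. This is the key simplification: all cross terms vanish, and
\[
\sum_{\gamma}\frac{\norm{H_{\overline{f}}(z^\gamma)}^2}{c_\gamma^2}=\sum_{\alpha}\abs{a_\alpha}^2 S_\alpha,\qquad S_\alpha:=\sum_{\gamma}\frac{\norm{(I-P)(\overline{z}^\alpha z^\gamma)}^2}{c_\gamma^2}.
\]
It therefore suffices to prove $S_\alpha=\infty$ for every $\alpha\neq 0$.

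Next I would make $S_\alpha$ explicit. Using the formula for $P$ together with $\norm{\overline{z}^\alpha z^\gamma}^2=c_{\gamma+\alpha}^2$, one finds $\norm{(I-P)(\overline{z}^\alpha z^\gamma)}^2=c_{\gamma+\alpha}^2-c_\gamma^4/c_{\gamma-\alpha}^2$, the last term read as $0$ when $\gamma\not\geq\alpha$. Setting $R_\mu:=c_{\mu+\alpha}^2/c_\mu^2$ and extending $R$ by $0$ to indices with a negative entry, the $\gamma$-summand equals $R_\gamma-R_{\gamma-\alpha}$, and a telescoping over the box $\prod_j[0,M]$ collapses to a sum over its upper shell:
\[
\sum_{\gamma\in\prod_j[0,M]}\bigl(R_\gamma-R_{\gamma-\alpha}\bigr)=\sum_{\gamma\in\,\text{shell}}R_\gamma,\qquad \text{shell}=\textstyle\prod_j[0,M]\setminus\prod_j[0,M-\alpha_j].
\]
Since every summand of $S_\alpha$ is nonnegative, $S_\alpha=\lim_{M\to\infty}\sum_{\text{shell}}R_\gamma$, so it is enough to show that a single face of the shell already contributes an amount tending to $\infty$.

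For the final and hardest step I would compute $c_\gamma^2$ explicitly. Integrating first in $z_n$ and then applying the Dirichlet (Beta) integral over the simplex yields
\[
c_\gamma^2=K\,\frac{\prod_{j=1}^n\Gamma\!\left(\tfrac{\gamma_j+1}{m_j}\right)}{\Gamma\!\left(1+\sum_{j=1}^n\tfrac{\gamma_j+1}{m_j}\right)},
\]
with $K>0$ independent of $\gamma$. Assuming (after relabeling) $\alpha_1\geq 1$, I would study $R_\gamma$ on the face $\gamma_1=M$ using $\Gamma(x+\beta)/\Gamma(x)\sim x^\beta$. The hard part is the two-scale nature of the resulting sum: the growing coordinate $x_1=(M+1)/m_1$ competes with $\sigma=\sum_{j\geq2}x_j$ inside the denominator $\Gamma$-ratio $\sim(x_1+\sigma)^{\sum_j \alpha_j/m_j}$, so the naive ray estimates can even converge. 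Restricting to the part of the face with $\sigma\lesssim x_1$ (so that $x_1+\sigma\approx x_1$) and rescaling the remaining $n-1$ coordinates by $x_1$ converts the face sum into $x_1^{\,n-1}$ times a convergent positive Dirichlet integral; hence the face contributes at least a constant multiple of $x_1^{\,n-1}\to\infty$. Because $n\geq 2$ this forces $S_\alpha=\infty$ for every $\alpha\neq0$, so the Hilbert--Schmidt sum can be finite only when $a_\alpha=0$ for all $\alpha\neq0$, that is, when $f$ is constant.
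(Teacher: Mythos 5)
Your proposal is correct and follows essentially the same route as the paper: reduce the Hilbert--Schmidt sum to $\sum_{\alpha}|a_{\alpha}|^{2}S_{\alpha}$ via orthogonality of monomials on a Reinhardt domain, lower-bound $S_{\alpha}$ by telescoping the nonnegative summands $R_{\gamma}-R_{\gamma-\alpha}$ down to a boundary-shell sum, and show that shell sum diverges using the explicit Gamma-function formula for $c_{\gamma}^{2}$ together with Stirling-type asymptotics. The only differences are cosmetic: you telescope over boxes $\prod_{j}[0,M]$ and extract a coordinate face contributing $\asymp M^{n-1}$, and you work directly in $\C^{n}$, whereas the paper telescopes over simplices $\{|\gamma|\leq N\}$ and lower-bounds by the middle third of the diagonal shell $|\gamma|=N$ (getting $\asymp N$ in $\C^{2}$, with the general case said to follow identically).
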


\begin{remark}\label{RemarkNew}
The new ingredient in the proof of Theorem \ref{Main} is the estimate \eqref{estimating S_{alpha}} for the term denoted by $S_{\alpha}$. The analogue of Theorem 1 on the unit disc and on the unit $n$-ball were done in \cite{ArazyFisherPeetre88,Schneider07} respectively, by explicitly computing the term $S_{\alpha}$. Such a fully revealed computation on complex ellipsoids is not easy. However, an estimate like \eqref{estimating S_{alpha}} turns out to be enough.
\end{remark}

\begin{remark}
Theorem \ref{Main} partially generalizes the similar result \cite[Theorem 1.1]{KrantzLiRochberg97}. Namely, the result \cite[Theorem 1.1]{KrantzLiRochberg97} is valid for complex ellipsoids in $\C^{2}$ that are of finite type (that is, $m_1$ and $m_2$ in the definition of the ellipsoids are considered to be positive integers, which makes the domain to be of finite type). On the other hand, Theorem \ref{Main} does not have any restriction on the dimension and does not require any smoothness of the boundary.
\end{remark}

%\begin{remark}
%The proof in the next section is fairly elementary and works on many other Reinhardt domains such as polydiscs and %exponential type domains of the form
%\[\left\{(z_1,z_2)\in \mathbb{C}^2\ |\ z_1\in\disc,\ \abs{z_2}\leq e^{\frac{-a}{(1-\abs{z_1}^2)^{b}}}\right\}.\]
%See last section for computations on these domains.
%\blue{It may be a good idea to say one or two sentences why do we point out specifically those three domains. For example, once you said that exponential type domains of the above form are different from the other radially symmetric domains from the Bergman Projection perspective. You also commented that the existence of a nontrivial Hilbert-Schmidt Hankel operator (on $n$-ball, polydisc, egg domain) fails because of the high degree monomials move to live near by the boundary of the domain, specifically when the boundary is closer to the base domain.  However, on exponential type domains (of the above form) this property (behavior) disappears. Can we say that Hartogs triangle and polydisc favors the high degree monomials because of its drastic change near the boundary toward the base? I also remember that you said that 'the high degree monomials move to live near by the boundary of the domain, specifically when the boundary is closer to the base domain' is the main reason why Schnider was able to get the same negative result about the nonexistence of the nonzero HS Hankel operators (with anti-hol. symbol) on the weighted Bergman space.}
%\end{remark}

\section{Proof of the Theorem}

The set $\left\{\frac{z^{\gamma}}{c_{\gamma}}\right\}_{\gamma\in \n^{n}}$  is an orthonormal basis for $A^{2}(\D)$. In order to prove Theorem \ref{Main}, we will look at the sum 
\[\sum\limits_{\gamma}\left\|H_{\overline{f}}\left(\frac{z^{\gamma}}{c_{\gamma}}\right)\right\|^2\]
for $f\in A^{2}(\D)$.

Let's start with the $L^2$-norm of $H_{\overline{f}}\left(\frac{z^{\gamma}}{c_{\gamma}}\right)$,
\begin{align}\label{norm of Hankel}
 \left\|H_{\overline{f}}\left(\frac{z^{\gamma}}{c_{\gamma}}\right)\right\|^2=\left\langle H_{\overline{f}}\left(\frac{z^{\gamma}}{c_{\gamma}}\right),H_{\overline{f}}\left(\frac{z^{\gamma}}{c_{\gamma}}\right)\right\rangle=\frac{1}{c_{\gamma}^{2}}\left\langle H_{\overline{f}}^{*}H_{\overline{f}}\left(z^{\gamma}\right),z^{\gamma}\right\rangle.
\end{align}
Next, we rewrite the term $H_{\overline{f}}^{*}H_{\overline{f}}\left({z^{\gamma}}\right)$, by using the power series expansion $f(z)=\sum\limits_{\alpha}f_{\alpha}z^{\alpha}$,
\begin{align}\label{last term}
H_{\overline{f}}^{*}H_{\overline{f}}\left(z^{\gamma}\right)=\sum\limits_{\alpha,\beta}f_{\alpha}\overline{f}_{\beta}H_{\overline{z}^{\alpha}}^{*}H_{\overline{z}^{\beta}}\left(z^{\gamma}\right).
\end{align}
The single term $H_{\overline{z}^{\alpha}}^{*}H_{\overline{z}^{\beta}}\left(z^{\gamma}\right)$ can be computed further as,
\begin{align}\label{product of Hankels}
H_{\overline{z}^{\alpha}}^{*}H_{\overline{z}^{\beta}}\left(z^{\gamma}\right)
=H_{\overline{z}^{\alpha}}^{*}\left(\overline{z}^{\beta}z^{\gamma}-P(\overline{z}^{\beta}z^{\gamma})\right)
=H_{\overline{z}^{\alpha}}^{*}\left(\overline{z}^{\beta}z^{\gamma}-\frac{c_{\gamma}^{2}}{c_{\gamma-\beta}^{2}}z^{\gamma-\beta}\right).
\end{align}
Note that in the last term of \eqref{product of Hankels}, we set the denominator $c_{\gamma-\beta}^{2}=\infty$ if $\gamma-\beta$ has a negative number in it. Note also that the second equality follows by 
\begin{align*}
P(\overline{z}^{\beta}z^{\gamma})&=\int\limits_{\D}B_{\D}(z,w)\overline{w}^{\beta}w^{\gamma}dw
=\int\limits_{\D}\left(\sum\limits_{\alpha}\frac{z^{\alpha}\overline{w}^{\alpha}}{c_{\alpha}^{2}}\right)\overline{w}^{\beta}w^{\gamma}dw\\
&=\int\limits_{\D}\left(\sum\limits_{\alpha}\frac{z^{\alpha}\overline{w}^{\alpha+\beta}}{c_{\alpha}^{2}}\right)w^{\gamma}dw
=\frac{z^{\gamma-\beta}}{c_{\gamma-\beta}^2}\int\limits_{\D}|w|^{2\gamma}dw=\frac{c_{\gamma}^2}{c_{\gamma-\beta}^2}z^{\gamma-\beta}.
\end{align*}
Moreover, by imposing $H_{\phi}^{*}(g)=P(\overline{\phi}g)-\overline{\phi}P(g)$ in \eqref{product of Hankels} we obtain
\begin{align}\label{adjoint of Hankel}
H_{\overline{z}^{\alpha}}^{*}\left(\overline{z}^{\beta}z^{\gamma}-\frac{c_{\gamma}^{2}}{c_{\gamma-\beta}^{2}}z^{\gamma-\beta}\right)
&=P\left(z^{\alpha}\overline{z}^{\beta}z^{\gamma}-\frac{c_{\gamma}^{2}}{c_{\gamma-\beta}^{2}}z^{\alpha+\gamma-\beta}\right)
-z^{\alpha}P\left(\overline{z}^{\beta}z^{\gamma}-\frac{c_{\gamma}^{2}}{c_{\gamma-\beta}^{2}}z^{\gamma-\beta}\right)\\
%\nonumber &=\frac{c_{\alpha +\gamma}^{2}}{c_{\alpha+\gamma-\beta}^{2}}z^{\alpha+\gamma-\beta}-\cancel{\frac{c_{\gamma}^{2}}{c_{\gamma-\beta}^{2}}z^{\blue{alpha+}\gamma-\beta}}-\frac{c_{\gamma}^{2}}{c_{\gamma-\beta}^{2}}z^{\alpha+\gamma-\beta}+\cancel{\frac{c_{\gamma}^{2}}{c_{\gamma-\beta}^{2}}z^{\blue{alpha +}\gamma-\beta}}\\
\nonumber&=\left(\frac{c_{\alpha+\gamma}^{2}}{c_{\alpha+\gamma-\beta}^{2}}
-\frac{c_{\gamma}^{2}}{c_{\gamma-\beta}^{2}}\right)z^{\alpha+\gamma-\beta}.
\end{align}
Putting \eqref{adjoint of Hankel}, \eqref{product of Hankels}, and \eqref{last term} together, we continue to calculate the $L^2$-norm of $H_{\overline{f}}\left(\frac{z^{\gamma}}{c_{\gamma}}\right)$ as follows,
\begin{align}\label{norm of Hankel - continuation}
 \left\|H_{\overline{f}}\left(\frac{z^{\gamma}}{c_{\gamma}}\right)\right\|^2&=\frac{1}{c_{\gamma}^{2}}\left\langle \sum\limits_{\alpha,\beta}\overline{f}_{\alpha}f_{\beta}\left(\frac{c_{\alpha+\gamma}^{2}}{c_{\alpha+\gamma-\beta}^{2}}
-\frac{c_{\gamma}^{2}}{c_{\gamma-\beta}^{2}}\right)z^{\alpha+\gamma-\beta},z^{\gamma}\right\rangle\\
\nonumber &=\frac{1}{c_{\gamma}^{2}} \sum\limits_{\alpha}\left|\overline{f}_{\alpha}\right|^{2}\left(\frac{c_{\alpha+\gamma}^{2}}{c_{\gamma}^{2}}
-\frac{c_{\gamma}^{2}}{c_{\gamma-\alpha}^{2}}\right)\left\langle z^{\gamma},z^{\gamma}\right\rangle
= \sum\limits_{\alpha}\left|\overline{f}_{\alpha}\right|^{2}\left(\frac{c_{\alpha+\gamma}^{2}}{c_{\gamma}^{2}}
-\frac{c_{\gamma}^{2}}{c_{\gamma-\alpha}^{2}}\right).
\end{align}

We finally obtain,
\begin{align}\label{norm of Hankel - last step}
\sum\limits_{\gamma}\left\|H_{\overline{f}}\left(\frac{z^{\gamma}}{c_{\gamma}}\right)\right\|^2
&=\sum\limits_{\gamma,\alpha}\left|\overline{f}_{\alpha}\right|^{2}\left(\frac{c_{\alpha+\gamma}^{2}}{c_{\gamma}^{2}}
-\frac{c_{\gamma}^{2}}{c_{\gamma-\alpha}^{2}}\right)\\
\nonumber &=\sum\limits_{\alpha}\left|\overline{f}_{\alpha}\right|^{2}\sum\limits_{\gamma}\left(\frac{c_{\alpha+\gamma}^{2}}{c_{\gamma}^{2}}
-\frac{c_{\gamma}^{2}}{c_{\gamma-\alpha}^{2}}\right).
\end{align}

The term $\sum\limits_{\gamma}\left(\frac{c_{\gamma+\alpha}^{2}}{c_{\gamma}^2}-\frac{c_{\gamma}^{2}}{c_{\gamma-\alpha}^2}\right)$ in the identity \eqref{norm of Hankel - last step} plays an essential role in the rest of the proof, and we label it as,  
\begin{align}\label{S-alpha}
S_{\alpha}:=\sum\limits_{\gamma}\left(\frac{c_{\gamma+\alpha}^{2}}{c_{\gamma}^2}-\frac{c_{\gamma}^{2}}{c_{\gamma-\alpha}^2}\right).
\end{align}
Note that, the Cauchy-Schwarz inequality guarantees that $$\frac{c_{\gamma+\alpha}^{2}}{c_{\gamma}^2}-\frac{c_{\gamma}^{2}}{c_{\gamma-\alpha}^2}\geq 0$$
for all $\alpha$ and $\gamma$.

The computations above hold on any domains where the monomials form an orthonormal basis for the Bergman space. In particular, similar computations were done in \cite[Section 3]{ArazyFisherPeetre88} on the unit disc of the complex plane and the following result was obtained.  

\begin{example}\label{Disc example}
Let $\D=\disc$ be the unit disc in the complex plane and $f\in A^2(\disc)$. The Hankel operator $H_{\overline{f}}$ is Hilbert-Schmidt if and only if the complex derivative of $f$ is in $L^{2}(\disc)$, (that is, $f$ is in the Dirichlet space).  We present the short argument of this statement for the reader.

On the unit disc $\disc$, we can compute the coefficients $c_{\gamma}$'s explicitly and obtain that $$S_{\alpha}=\sum\limits_{\gamma}\left(\frac{c_{\alpha+\gamma}^{2}}{c_{\gamma}^{2}} -\frac{c_{\gamma}^{2}}{c_{\gamma-\alpha}^{2}}\right)=\alpha.$$ 
Therefore, by \eqref{norm of Hankel - last step} we get 
$$\sum\limits_{\gamma}\left\|H_{\overline{f}}\left(\frac{z^{\gamma}}{c_{\gamma}}\right)\right\|^2=\sum\limits_{\alpha}\alpha\left|\overline{f}_{\alpha}\right|^{2}.$$ 
On the other hand,
\[\sum\limits_{\alpha}\alpha\left|\overline{f}_{\alpha}\right|^{2}\approx \int\limits_{\disc}\left|f^{\prime}(z)\right|^{2}dA(z).\] 
This concludes that $H_{\overline{f}}$ is Hilbert-Schmidt if and only if $f$ is in the Dirichlet space. 
\end{example}

Now, we go back to the proof of the main results and estimate the term $S_{\alpha}$ on complex ellipsoids. Our goal is to show that $S_{\alpha}$ diverges for all nonzero $\alpha$ on these domains. By \eqref{norm of Hankel - last step}, this will be sufficient to conclude Theorem \ref{Main}.

In earlier results in the literature, $S_{\alpha}$'s were computed explicitly to obtain the divergence. Here, we obtain the divergence by using the following estimate from Lemma \ref{LowerBound}.

\begin{lemma}\label{LowerBound}
For any sufficiently large $N$, we have

\begin{align}\label{estimating S_{alpha}}
 S_{\alpha}\geq \sum\limits_{|\gamma|=N} \frac{c_{\gamma+\alpha}^{2}}{c_{\gamma}^2}
\end{align}
for any nonzero $\alpha$.
\end{lemma}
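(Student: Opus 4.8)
The plan is to expose a hidden telescoping structure in $S_\alpha$ once the sum is organized by total degree. First I would abbreviate $a_\gamma := c_{\gamma+\alpha}^2/c_\gamma^2$, so that, under the convention $c_{\gamma-\alpha}^2=\infty$ (hence the corresponding ratio vanishes) whenever $\gamma-\alpha\notin\n^n$, the summand of \eqref{S-alpha} reads $a_\gamma-a_{\gamma-\alpha}$. The step I would isolate at the outset is that \emph{each} summand is nonnegative --- this is precisely the Cauchy--Schwarz remark recorded right after \eqref{S-alpha}. Consequently $S_\alpha$ is a sum of nonnegative terms over $\n^n$, so it is well defined in $[0,\infty]$ independently of the order of summation, it may be regrouped freely, and every partial sum is a lower bound for $S_\alpha$.

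Next I would group the terms by the level sets $|\gamma|=L$. Setting $T_L := \sum_{|\gamma|=L} a_\gamma = \sum_{|\gamma|=L} c_{\gamma+\alpha}^2/c_\gamma^2$, the ``$a_\gamma$'' halves contribute $\sum_L T_L$. For the ``$a_{\gamma-\alpha}$'' halves I would reindex: within $\sum_{|\gamma|=L}a_{\gamma-\alpha}$ only the indices with $\gamma\geq\alpha$ survive, and the substitution $\delta=\gamma-\alpha$ is a bijection of $\{\gamma\in\n^n:|\gamma|=L,\ \gamma\geq\alpha\}$ onto $\{\delta\in\n^n:|\delta|=L-|\alpha|\}$, turning that block into $\sum_{|\delta|=L-|\alpha|}a_\delta=T_{L-|\alpha|}$. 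Writing $d=|\alpha|\geq 1$ (recall $\alpha\neq 0$) and $T_L=0$ for $L<0$, this yields
\[
S_\alpha=\sum_{L\geq 0}\bigl(T_L-T_{L-d}\bigr).
\]

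This is a telescoping series of step $d$. Computing the partial sum through level $N$, the interior terms cancel and leave
\[
\sum_{L=0}^{N}\bigl(T_L-T_{L-d}\bigr)=\sum_{L=N-d+1}^{N}T_L .
\]
Since every block $T_L-T_{L-d}=\sum_{|\gamma|=L}(a_\gamma-a_{\gamma-\alpha})$ is nonnegative by the first step, I may discard the tail $L>N$; and since each $T_L\geq 0$, I may keep only the top level $L=N$ to conclude
\[
S_\alpha\ \geq\ \sum_{L=N-d+1}^{N}T_L\ \geq\ T_N\ =\ \sum_{|\gamma|=N}\frac{c_{\gamma+\alpha}^2}{c_\gamma^2},
\]
which is exactly \eqref{estimating S_{alpha}}. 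Note this in fact holds for \emph{every} $N$, hence a fortiori for all sufficiently large $N$.

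The argument is purely formal once the organizing idea is in place: summing along level sets of the total degree collapses the shift-by-$\alpha$ into a one-dimensional telescoping, and the only analytic input is the sign of the summands, already furnished by Cauchy--Schwarz. I expect no genuine obstacle; the one point that merits care is the reindexing $\sum_{|\gamma|=L}a_{\gamma-\alpha}=T_{L-|\alpha|}$, where one must verify that the $\infty$-convention discards precisely the indices with $\gamma\not\geq\alpha$ and that the boundary levels $L<|\alpha|$ (for which $T_{L-|\alpha|}=0$) are treated consistently --- both routine.
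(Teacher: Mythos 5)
Your proof is correct and follows essentially the same route as the paper: both arguments rest on the Cauchy--Schwarz nonnegativity of the summands together with the shift $\gamma\mapsto\gamma-\alpha$, which pairs each negative term $c_{\gamma}^{2}/c_{\gamma-\alpha}^{2}$ bijectively with a positive term at total degree $|\gamma|-|\alpha|$ --- exactly the ``telescoping structure'' the paper invokes via Figure \ref{Fig1}. Your level-set bookkeeping $S_\alpha=\sum_{L}\bigl(T_L-T_{L-|\alpha|}\bigr)$ is just a fully explicit rendering of that cancellation, and as a small bonus it yields \eqref{estimating S_{alpha}} (indeed the stronger bound $S_\alpha\geq\sum_{L=N-|\alpha|+1}^{N}T_L$) for every $N$, not merely for $N$ sufficiently large.
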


\begin{proof}
Considering the definition of $S_{\alpha}$ at \eqref{S-alpha}, for any sufficiently large $N$, we have
\begin{align*}
 S_{\alpha}&\geq \sum\limits_{|\gamma|\leq N} \left(\frac{c_{\gamma+\alpha}^{2}}{c_{\gamma}^{2}} -\frac{c_{\gamma}^{2}}{c_{\gamma-\alpha}^{2}}\right)\\
&=\sum\limits_{|\gamma|=N} \frac{c_{\gamma+\alpha}^{2}}{c_{\gamma}^2}-\sum\limits_{|\gamma|=N} \frac{c_{\gamma}^{2}}{c_{\gamma-\alpha}^2}+\sum\limits_{|\gamma|< N} \left(\frac{c_{\gamma+\alpha}^{2}}{c_{\gamma}^{2}} -\frac{c_{\gamma}^{2}}{c_{\gamma-\alpha}^{2}}\right).
\end{align*}
We finish the proof of the lemma when we verify that
\begin{align*}
\sum\limits_{|\gamma|< N} \left(\frac{c_{\gamma+\alpha}^{2}}{c_{\gamma}^{2}} -\frac{c_{\gamma}^{2}}{c_{\gamma-\alpha}^{2}}\right)-\sum\limits_{|\gamma|=N} \frac{c_{\gamma}^{2}}{c_{\gamma-\alpha}^2}\geq 0.
\end{align*}  
However, this simply follows from the telescoping structure of the series, see Figure \ref{Fig1}.

%%%%%%%%%%%%%%%%%%%%%
\begin{figure}[ht!]
\centering
\includegraphics[width=100mm]{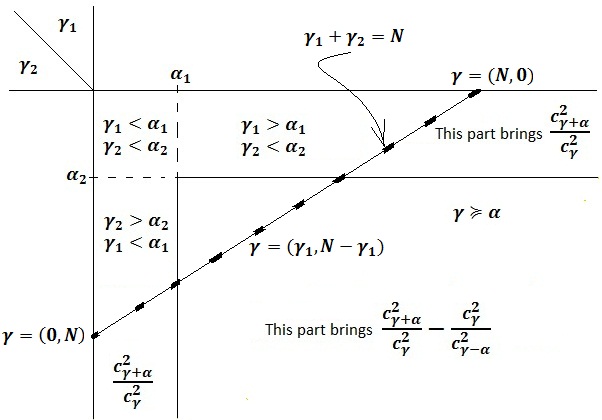}
\caption{Two dimensional illustration for distribution of terms in the sum according to $\gamma\succeq\alpha$, $\gamma\not\succeq\alpha$, and on the diagonal $|\gamma|=N$. Here, $\gamma\succeq\alpha$ means $\gamma_i\geq\alpha_i$ for $i=1,\dots,n$ and $\gamma\not\succeq\alpha$ means otherwise.}
\label{Fig1}
\end{figure}
%%%%%%%%%%%%%%%%%%%%%
Indeed,
\begin{align*}
\sum\limits_{|\gamma|< N} \left(\frac{c_{\gamma+\alpha}^{2}}{c_{\gamma}^{2}} -\frac{c_{\gamma}^{2}}{c_{\gamma-\alpha}^{2}}\right)-\sum\limits_{|\gamma|=N} \frac{c_{\gamma}^{2}}{c_{\gamma-\alpha}^2}&=\sum\limits_{|\gamma|< N}\frac{c_{\gamma+\alpha}^{2}}{c_{\gamma}^{2}} -\sum\limits_{|\gamma|\leq N} \frac{c_{\gamma}^{2}}{c_{\gamma-\alpha}^2}\\
&=\sum\limits_{|\gamma|< N}\frac{c_{\gamma+\alpha}^{2}}{c_{\gamma}^{2}} -\sum\limits_{\substack{|\gamma|\leq N\\ \gamma\succeq \alpha}} \frac{c_{\gamma}^{2}}{c_{\gamma-\alpha}^2}
\end{align*}  
since $\frac{c_{\gamma}^{2}}{c_{\gamma-\alpha}^2}=0$ when $\gamma\not\succeq\alpha$. Therefore, there are fewer number of negative terms and all the negative terms are canceled by some of the earlier positive terms, so the sum contains some number of positive terms after cancellation.

\end{proof}

Next, we show that the sum $\sum\limits_{|\gamma|=N} \frac{c_{\gamma+\alpha}^{2}}{c_{\gamma}^2}$ diverges for any nonzero $\alpha$. We start with computing the coefficients $c_{\gamma}$'s. For the rest of the proof, we concentrate on complex ellipsoids in $\mathbb{C}^2$. The general case follows from repeating the same arguments.

\begin{lemma}
Let $\D$ be a complex ellipsoid in $\mathbb{C}^2$, $\D=\{(z_1,z_{2})\in \C^{2}\ |\ |z_{1}|^{2m_1}+|z_{2}|^{2m_{2}}<1\}$,
where  $m_{1}, m_2\in \R^{+}$. Then 
\begin{align}
c_{\alpha}=\frac{\pi^{2}}{m_1m_2}\cdot\frac{\Gamma\left(\frac{\alpha_1+1}{m_1}\right)\cdot \Gamma\left(\frac{\alpha_2+1}{m_2}\right)}{\Gamma\left(1+\frac{\alpha_1+1}{m_1}+\frac{\alpha_2+1}{m_2}\right)}.
\end{align}
\end{lemma}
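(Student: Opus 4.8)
The plan is to compute the integral $c_{\alpha}^2=\int_{\D}\abs{z^{\alpha}}^2\,dV(z)$ directly, exploiting the Reinhardt structure that the author has already used to establish orthogonality of the monomials. First I would pass to polar coordinates in each variable, writing $z_1=r_1e^{i\theta_1}$ and $z_2=r_2e^{i\theta_2}$. The angular integrations each contribute a factor of $2\pi$, leaving
\begin{align*}
c_{\alpha}^2=(2\pi)^2\iint_{r_1^{2m_1}+r_2^{2m_2}<1} r_1^{2\alpha_1+1}r_2^{2\alpha_2+1}\,dr_1\,dr_2,
\end{align*}
where the region of integration comes directly from the defining inequality $\abs{z_1}^{2m_1}+\abs{z_2}^{2m_2}<1$.

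Next I would make the substitution that straightens the ellipsoidal region into a standard simplex. Setting $u=r_1^{2m_1}$ and $v=r_2^{2m_2}$, the region becomes $\{u+v<1,\ u,v>0\}$, and the Jacobian computation (differentiating $r_j=u^{1/(2m_j)}$, etc.) turns the integrand into a product of powers of $u$ and $v$. After the dust settles one lands on a classical Dirichlet-type integral of the form $\int_{u+v<1}u^{a-1}v^{b-1}\,du\,dv$, whose value is the Beta-function expression $\frac{\Gamma(a)\Gamma(b)}{\Gamma(a+b+1)}$. Matching exponents should give $a=\frac{\alpha_1+1}{m_1}$ and $b=\frac{\alpha_2+1}{m_2}$, and collecting the constant prefactors ($(2\pi)^2$ together with the $\tfrac{1}{2m_j}$ from each Jacobian factor) should reproduce the stated coefficient $\frac{\pi^2}{m_1m_2}$.

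I do not expect any serious obstacle here; this is a routine though slightly fiddly calculation, and the only real care needed is in tracking the Jacobian factors and the shift by one in the Gamma-function arguments so that the final normalizing constant and the $+1$ inside $\Gamma\bigl(1+\frac{\alpha_1+1}{m_1}+\frac{\alpha_2+1}{m_2}\bigr)$ come out exactly as claimed. One clean way to organize the Dirichlet integral and avoid bookkeeping errors is to recall the general formula $\int_{\{u_j>0,\ \sum u_j<1\}}\prod_j u_j^{a_j-1}\,du=\frac{\prod_j\Gamma(a_j)}{\Gamma\bigl(1+\sum_j a_j\bigr)}$, which already has the desired $+1$ built into the denominator, and then simply substitute the appropriate $a_j$. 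The two-dimensional case is written out explicitly because, as the authors note, the $n$-dimensional version follows by the identical substitution $u_j=r_j^{2m_j}$ and the general Dirichlet integral above, yielding $c_{\alpha}=\frac{\pi^n}{\prod_j m_j}\cdot\frac{\prod_j\Gamma\bigl(\frac{\alpha_j+1}{m_j}\bigr)}{\Gamma\bigl(1+\sum_j\frac{\alpha_j+1}{m_j}\bigr)}$.
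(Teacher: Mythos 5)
Your computation is correct, and it arrives at exactly the formula the paper proves (note that both you and the paper's own proof compute $c_{\alpha}^{2}$; the displayed statement writes $c_{\alpha}$, which is evidently a typo in the paper). Your route, however, is organized differently from the paper's. The paper integrates iteratively and asymmetrically: it first integrates out $z_{2}$ over the disc $\abs{z_{2}}<(1-\abs{z_1}^{2m_1})^{1/(2m_2)}$, obtaining the factor $\frac{\pi}{\alpha_2+1}(1-\abs{z_1}^{2m_1})^{(\alpha_2+1)/m_2}$, then passes to polar coordinates in $z_{1}$ alone and, after the substitutions $r^{2}\mapsto r$ and $t=r^{m_1}$, lands on the one-variable Beta integral $B\bigl(\frac{\alpha_1+1}{m_1},\frac{\alpha_2+1}{m_2}+1\bigr)$, finally using $\Gamma(x+1)=x\Gamma(x)$ to absorb the leftover factor $\frac{\alpha_2+1}{m_2}$. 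You instead treat the two variables symmetrically from the start: polar coordinates in each factor, the simultaneous substitution $u=r_1^{2m_1}$, $v=r_2^{2m_2}$ straightening the region into the standard simplex, and an appeal to the classical Dirichlet integral $\int_{\{u_j>0,\ \sum u_j<1\}}\prod_j u_j^{a_j-1}\,du=\frac{\prod_j\Gamma(a_j)}{\Gamma\bigl(1+\sum_j a_j\bigr)}$ with $a_1=\frac{\alpha_1+1}{m_1}$, $a_2=\frac{\alpha_2+1}{m_2}$; your constant bookkeeping $(2\pi)^2\cdot\frac{1}{2m_1}\cdot\frac{1}{2m_2}=\frac{\pi^2}{m_1m_2}$ checks out. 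The two arguments are computationally equivalent --- the Dirichlet formula is itself proved by exactly the iterated integration the paper performs --- but yours buys a cleaner generalization: it yields the $n$-dimensional formula $c_{\alpha}^{2}=\frac{\pi^{n}}{\prod_j m_j}\cdot\frac{\prod_j\Gamma\bigl(\frac{\alpha_j+1}{m_j}\bigr)}{\Gamma\bigl(1+\sum_j\frac{\alpha_j+1}{m_j}\bigr)}$ in one stroke, where the paper only gestures at ``repeating the same arguments,'' while the paper's version stays entirely at the level of elementary one-variable Beta integrals and needs no citation of (or detour through) the general Dirichlet formula.
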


\begin{proof}
We compute the coefficients as follows,

\begin{align*}
c_\alpha^{2}&=\int\limits_{\D}\abs{z^{\alpha}}^2dV(z)
=\int\limits_{\disc}\abs{z_1}^{2\alpha_1}\int\limits_{\abs{z_{2}}<(1-\abs{z_1}^{2m_1})^{\frac{1}{2m_2}}}\abs{z_{2}}^{2\alpha_2}dA(z_{2})dA(z_{1})\\
&=\int\limits_{\disc}\abs{z_1}^{2\alpha_1}\frac{2\pi}{2\alpha_2+2}(1-\abs{z_1}^{2m_1})^{\frac{2\alpha_2+2}{2m_2}}dA(z_1)\\
&=\frac{\pi}{\alpha_2+1}\int\limits_{\disc}\abs{z_1}^{2\alpha_1}(1-\abs{z_1}^{2m_1})^{\frac{\alpha_2+1}{m_2}}dA(z_1)
=\frac{2\pi^2}{\alpha_2+1}\int\limits_{0}^{1}r^{2\alpha_1+1}(1-r^{2m_1})^{\frac{\alpha_2+1}{m_2}}dr.
\end{align*}
We continue with $r^2=r$ substitution,
\begin{align*}
c_\alpha^{2}&=\frac{\pi^2}{\alpha_2+1}\int\limits_{0}^{1}r^{\alpha_1}(1-r^{m_1})^{\frac{\alpha_2+1}{m_2}}dr.
\end{align*}
Let's substitute $t=r^{m_1}$, so $dt=m_1 r^{m_1-1}dr$. 
\begin{align*}
&=\frac{\pi^2}{\alpha_2+1}\int\limits_{0}^{1}t^{\frac{\alpha_1}{m_1}}(1-t)^{\frac{\alpha_2+1}{m_2}}\frac{1}{m_1 t^{1-\frac{1}{m_1}}}dt
=\frac{\pi^2}{(\alpha_2+1)m_1}\int\limits_{0}^{1}t^{\frac{\alpha_1+1}{m_1}-1}(1-t)^{\frac{\alpha_2+1}{m_2}}dt\\
&=\frac{\pi^2}{m_1m_2}\frac{1}{\left(\frac{\alpha_2+1}{m_2}\right)}B\left(\frac{\alpha_1+1}{m_1},\frac{\alpha_2+1}{m_2}+1\right)
\end{align*}
where $B(\cdot,\cdot)$ is the Beta function. Therefore,
\begin{align*}
c_{\alpha}^2&=\frac{\pi^2}{m_1m_2}\frac{1}{\left(\frac{\alpha_2+1}{m_2}\right)}
\frac{\Gamma\left(\frac{\alpha_1+1}{m_1}\right)\Gamma\left(\frac{\alpha_2+1}{m_2}+1\right)}{\Gamma\left(1+\frac{\alpha_1+1}{m_1}+\frac{\alpha_2+1}{m_2}\right)}
\end{align*}
By using one of the properties of Gamma function, $\Gamma(x+1)=x\Gamma(x)$ we get
 \begin{align*}
c_{\alpha}^2&=\frac{\pi^2}{m_1m_2}\frac{\Gamma\left(\frac{\alpha_1+1}{m_1}\right)\Gamma\left(\frac{\alpha_2+1}{m_2}\right)}{\Gamma\left(1+\frac{\alpha_1+1}{m_1}+\frac{\alpha_2+1}{m_2}\right)}.
\end{align*}
\end{proof}

Thus, on a complex ellipsoid 
\begin{align}\label{the c-ratio}
\frac{c_{\gamma+\alpha}^2}{c_{\gamma}^{2}}&=\frac{\Gamma\left(\frac{\gamma_1+\alpha_1+1}{m_1}\right) 
\Gamma\left(\frac{\gamma_2+\alpha_2+1}{m_2}\right) 
\Gamma\left(1+\frac{\gamma_1+1}{m_1}+\frac{\gamma_2+1}{m_2}\right)}
{\Gamma\left(\frac{\gamma_1+1}{m_1}\right) 
\Gamma\left(\frac{\gamma_2+1}{m_2}\right)
\Gamma\left(1+\frac{\gamma_1+\alpha_1+1}{m_1}+\frac{\gamma_2+\alpha_2+1}{m_2}\right)}.
\end{align}
By Lemma \ref{LowerBound}, we have 
\begin{align}\label{first step on estimating S_{alpha}}
 S_{\alpha}\geq \sum\limits_{|\gamma|=N} \frac{c_{\gamma+\alpha}^{2}}{c_{\gamma}^2}
=\sum\limits_{\gamma_1+\gamma_2=N} \frac{c_{(\gamma_1,\gamma_2)+\alpha}^{2}}{c_{(\gamma_1,\gamma_2)}^2}
=\sum\limits_{\gamma_1=0}^{N} \frac{c_{(\gamma_1,N-\gamma_1)+\alpha}^{2}}{c_{(\gamma_1,N-\gamma_1)}^2}
=\sum\limits_{k=0}^{N} \frac{c_{\alpha+(k,N-k)}^{2}}{c_{(k,N-k)}^2}
\end{align}
for any nonzero $\alpha$. Note that, in the last equation we replace $\gamma_1$ by $k$ to simplify the notation. We insert the ratio \eqref{the c-ratio} into \eqref{first step on estimating S_{alpha}} to get

\begin{align}\label{second step on estimating S_{alpha}}
 S_{\alpha}\geq \sum\limits_{k=\frac{N}{3}}^{\frac{2N}{3}}
\frac{\Gamma\left(\frac{k+\alpha_1+1}{m_1}\right) 
\Gamma\left(\frac{N-k+\alpha_2+1}{m_2}\right) 
\Gamma\left(1+\frac{k+1}{m_1}+\frac{N-k+1}{m_2}\right)}
{\Gamma\left(\frac{k+1}{m_1}\right) 
\Gamma\left(\frac{N-k+1}{m_2}\right)
\Gamma\left(1+\frac{k+\alpha_1+1}{m_1}+\frac{N-k+\alpha_2+1}{m_2}\right)} 
\end{align}

Note that, when $N$ is large enough each $\Gamma(\cdots)$-expression in \eqref{second step on estimating S_{alpha}} is big enough to invoke \textit{Stirling's approximation} $\Gamma(x)\sim e^{-x}x^{x-\frac{1}{2}}$ to get

\begin{align}\label{third step on estimating S_{alpha}}
S_{\alpha}\geq c\sum\limits_{k=\frac{N}{3}}^{\frac{2N}{3}}
\Bigg\{&\frac{e^{-\left(\frac{k+\alpha_1+1}{m_1}+\frac{N-k+\alpha_2+1}{m_2}+1+\frac{k+1}{m_1}+\frac{N-k+1}{m_2}\right)}}
{e^{-\left(\frac{k+1}{m_1}+\frac{N-k+1}{m_2}+1+\frac{k+\alpha_1+1}{m_1}+\frac{N-k+\alpha_2+1}{m_2}\right)}}
\cdot \frac{\sqrt{\frac{k+1}{m_1}}\sqrt{\frac{N-k+1}{m_2}}\sqrt{1+\frac{k+\alpha_1+1}{m_1}+\frac{N-k+\alpha_2+1}{m_2}}}
{\sqrt{\frac{k+\alpha_1+1}{m_1}}\sqrt{\frac{N-k+\alpha_2+1}{m_2}}\sqrt{1+\frac{k+1}{m_1}+\frac{N-k+1}{m_2}}}\\
\nonumber &\\
\nonumber&\cdot\frac{\left(\frac{k+\alpha_1+1}{m_1}\right)^{\frac{k+\alpha_1+1}{m_1}}\left(\frac{N-k+\alpha_2+1}{m_2}\right)^{\frac{N-k+\alpha_2+1}{m_2}}\left(1+\frac{k+1}{m_1}+\frac{N-k+1}{m_2}\right)^{1+\frac{k+1}{m_1}+\frac{N-k+1}{m_2}}}
{\left(\frac{k+1}{m_1}\right)^{\frac{k+1}{m_1}}\left(\frac{N-k+1}{m_2}\right)^{\frac{N-k+1}{m_2}}\left(1+\frac{k+\alpha_1+1}{m_1}+\frac{N-k+\alpha_2+1}{m_2}\right)^{1+\frac{k+\alpha_1+1}{m_1}+\frac{N-k+\alpha_2+1}{m_2}}}\Bigg\}
\end{align}
 
From the last estimate in \eqref{third step on estimating S_{alpha}} it is easy to see that for large enough $N$ we have

\begin{align}\label{forth step on estimating S_{alpha}}
S_{\alpha}&\geq c\sum\limits_{k=\frac{N}{3}}^{\frac{2N}{3}}
\left\{\frac{\left(\frac{k}{m_1}\right)^{\frac{k+\alpha_1+1}{m_1}}\left(\frac{N-k}{m_2}\right)^{\frac{N-k+\alpha_2+1}{m_2}}\left(\frac{k}{m_1}+\frac{N-k}{m_2}\right)^{1+\frac{k+1}{m_1}+\frac{N-k+1}{m_2}}}
{\left(\frac{k}{m_1}\right)^{\frac{k+1}{m_1}}\left(\frac{N-k}{m_2}\right)^{\frac{N-k+1}{m_2}}\left(\frac{k}{m_1}+\frac{N-k}{m_2}\right)^{1+\frac{k+\alpha_1+1}{m_1}+\frac{N-k+\alpha_2+1}{m_2}}}\right\}
\end{align}
\begin{align*}
\nonumber&=c^{\prime}\sum\limits_{k=\frac{N}{3}}^{\frac{2N}{3}} \frac{k^{\frac{\alpha_1}{m_1}}(N-k)^{\frac{\alpha_2}{m_2}}}{N^{\frac{\alpha_1}{m_1}+\frac{\alpha_2}{m_2}}}=c^{\prime\prime}\sum\limits_{k=\frac{N}{3}}^{\frac{2N}{3}} 1=c^{\prime\prime\prime}\cdot N
\end{align*}
Hence, $S_{\alpha}$ diverges for every nonzero $\alpha$.\\

Note once again that we obtain the divergence of $S_{\alpha}$'s without explicitly computing them but instead estimating them from below as, 
\begin{equation}\label{finalestimate}
S_{\alpha}\geq\sum\limits_{|\gamma|=N}\frac{c_{\gamma+\alpha}^{2}}{c_{\gamma}^{2}}=\sum\limits_{k=0}^{N}\frac{c_{\alpha+(k,N-k)}^{2}}{c_{(k,N-k)}^{2}}\geq \sum\limits_{k=\frac{N}{3}}^{\frac{2N}{3}}\frac{c_{\alpha+(k,N-k)}^{2}}{c_{(k,N-k)}^{2}}.
\end{equation}
Then, we notice that for large enough $N$ and $k\in\left[\frac{N}{3},\frac{2N}{3}\right]$ the fraction $\frac{c_{\alpha+(k,N-k)}^{2}}{c_{(k,N-k)}^{2}}$ is almost like $1$. Therefore, the last sum is of the size $N$ and $S_{\alpha}$ diverges. This careful estimate is the main ingredient that enables us to extend the result in \cite{KeheZhu90,Schneider07}.

\section{Remarks and Questions}

The proof of Theorem \ref{Main} works on some other Reinhardt domains, too. Below we go over the computations on polydiscs.

\subsection{Computations on Polydiscs}
We go over the argument on bi-disc in $\C^2$, see also  \cite{KrantzLiRochberg97}. A straightforward computation indicates,
$$c_{\alpha}^2=\int\limits_{\mathbb{D}\times\mathbb{D}}|z^{\alpha}|^2dV(z)=\frac{\pi^2}{(\alpha_1+1)(\alpha_2+1)}.$$
By invoking the estimate \eqref{finalestimate}, we obtain
\begin{align*}
S_{\alpha}&\geq \sum\limits_{k=\frac{N}{3}}^{\frac{2N}{3}}\frac{c_{\alpha+(k,N-k)}^{2}}{c_{(k,N-k)}^{2}}=\sum\limits_{k=\frac{N}{3}}^{\frac{2N}{3}}\frac{(k+1)(N-k+1)}{(k+1+\alpha_1)(N-k+1+\alpha_2)}\\
&\geq \sum\limits_{k=\frac{N}{3}}^{\frac{2N}{3}}\frac{(\frac{N}{3}+1)(\frac{N}{3}+1)}{(\frac{2N}{3}+1+\alpha_1)(\frac{2N}{3}+1+\alpha_2)}\\
&\geq \sum\limits_{k=\frac{N}{3}}^{\frac{2N}{3}}\frac{1}{5}\hskip 1cm \text{ (for sufficiently large }N)\\
&=\frac{N}{15}
\end{align*}
for all nonzero multi-index $\alpha$. Hence, $S_{\alpha}$ diverges for any nonzero $\alpha$ and we obtain Theorem 1 on polydiscs.
 
\subsection{Open Problem}
In the light of Theorem 1, and the computations above, the following question arises. 
\begin{question}
Is there a bounded Reinhardt domain in $\C^n$ on which there exists a non-trivial Hilbert-Schmidt Hankel operator with a conjugate holomorphic symbol?
\end{question}

Note that the boundedness assumption in the question is essential. Indeed, when the Bergman space is finite dimensional then any Hankel operator is Hilbert-Schmidt and in \cite{Wiegerinck84} Wiegerinck constructed Reinhardt domains (unbounded but with finite volume) in $\C^2$ for which the Bergman spaces are finite dimensional. 

\subsection{Canonical solution operator for $\dbar$-problem:}
The canonical solution operator for $\dbar$-problem restricted to $(0,1)$-forms with holomorphic coefficients is not a Hilbert-Schmidt operator on complex ellipsoids because the canonical solution operator for $\dbar$-problem restricted to $(0,1)$-forms with holomorphic coefficients is a sum of Hankel operators with $\{\zb_{j}\}_{j=1}^{n}$ as symbols (by Theorem \ref{Main} such Hankel operators are not Hilbert-Schmidt), $$\dbar^*N_1(g)=\dbar^*N_1 \left(\sum\limits_{j=1}^{n}g_{j}d\zb_{j}\right)=\sum\limits_{j=1}^{n}H_{\zb_j}(g_{j})$$ 
for any $(0,1)$-form $g$ with holomorphic coefficients.
Similar analysis has been done on unit disc, unit $n$-ball, and polydisc by Haslinger in \cite{Haslinger01}.

\section{acknowledgement}
We would like to thank Friedrich Haslinger and S\"{o}nmez \c{S}ahuto\u{g}lu for valuable comments on a
preliminary version of this manuscript.

\bibliographystyle{amsalpha}
\bibliography{CelikZeytuncuBib}
\end{document}